\newtheorem{theorem}{Theorem}
\begin{document} 
\title{\textbf{New formulas and trees of Pythagorean triples}}
\author{Pavlo Deriy}
\email{pavel.deriy@gmail.com} 
\keywords{Pythagorean Triple, Pythagorean Tree}
\pagestyle{plain}
\begin{abstract}
An overview of known formulas and trees was made. New formulas explaining the construction of the Berggren tree are presented. The use of new formulas for certain problems, specifically the insolubility of the equation $x^4+y^4=z^4$, is shown. New ways of constructing non-classical trees, in particular a complete binary tree of Pythagorean triples, are presented. The new formulas are extended to other symmetric equations, in particular $x^n+y^n+z^n=0$.
\end{abstract}
\date{\today}
\maketitle
\thinmuskip=3mu \medmuskip=2mu \thickmuskip=1mu
\vspace*{-0.5cm}
\centering\textbf{1. Pythagorean trees. Retrospective}

\raggedright Primitive Pythagorean triples are pairwise coprime natural $x,y,z$ satisfying $x^2+y^2=z^2$. In what follows, we will consider only primitive triples. The number of such triples is infinite.

\begin{figure}[h]
\captionsetup{format=empty}
\includegraphics[height=9.1cm, right]{./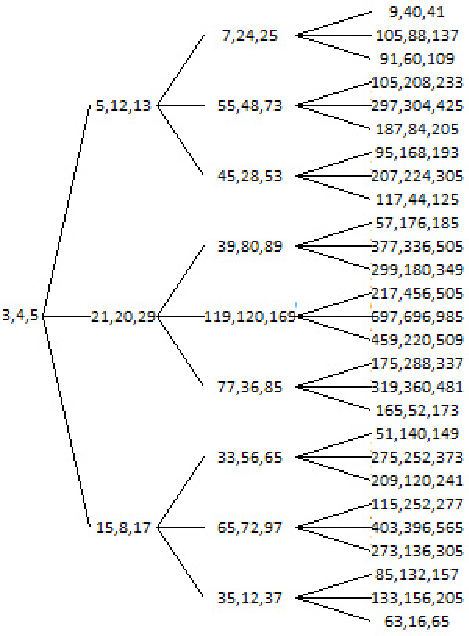}
\end{figure}
\begin{minipage}[c]{9.5cm}
\vspace*{-11.5cm}
In 1934, Berggren combined these triples into a ternary tree with the triple (3,4,5) at the root and proved it has all (and only) primitive Pythagorean triples without duplication [1].\\

Berggren found that the triple $(x+d,y+d,z+d)$ would also be Pythagorean if \mbox{$d=2(z-x-y)$}. He applied this shift to the triples $(-x,y,z),(-x,-y,z),(x,-y,z)$ and got three child triples. It is easy to see that such a shift is equivalent to multiplying $(x,y,z)$ by the matrices $A=\begin{bmatrix}1&$-$2&2\\2&$-$1&2\\2&$-$2&3\end{bmatrix}$, $B=\begin{bmatrix}1&2&2\\2&1&2\\2&2&3\end{bmatrix}$, $C=\begin{bmatrix}$-$1&2&2\\$-$2&1&2\\$-$2&2&3\end{bmatrix}$.\\

~\\The transition from child triples to the parent is carried out by matrices $A^{-1},B^{-1},C^{-1}$.
\end{minipage}

\vspace*{-1.2cm}There is only one path in the tree between the triples $(x,y,z)$ and $(x_1,y_1,z_1)$. You can go directly from the first triple to the second by multiplying the matrices along this path. So, the matrix $A^{-1}A^{-1}CAB$ leads from (7,24,25) to (275,252,373). This matrix is suitable only for those triples that are connected by exactly such a path.

In 2008, Price [2] and Firstov [3] built two more Pythagorean trees, also complete and unambiguous. These trees are also ternary, starting with (3,4,5) and generated by fixed sets of three matrices. Firstov proved that there are no other such trees [3].

~\\ \centering\textbf{2. Formulas of Pythagorean triples. Retrospective}

\raggedright Among relatively prime $x,y$, at least one is odd. We will consider $x$ to be odd. Then it follows from the equation $x^2+y^2=z^2$ that $y$ is even and $z$ is odd. 

There are several formulas for constructing Pythagorean triples. The most common and elegant option has been known since ancient times.

If $y=2y_1$ is even, then $y_1^2=(z+x)/2~\cdot~(z-x)/2$. The factors on the right-hand side of this equality are coprime since such are their sum and difference.

Therefore, these factors are equal to $u^2$ and $v^2$, so the following ratios are necessary 

(1)\qquad $x=u^2-v^2,y=2uv,z=u^2+v^2,u>v,(u,v)=1$.

The sufficiency of these ratios can be easily checked directly. 

Obviously, $u$ and $v$ have different parity, so $y$ is always divisible by 4. Either $u$ or $v$ can be even, making it difficult to enumerate the triples sequentially.

This drawback is easily eliminated if we express the triple in terms of divisors of the odd number $x$. In the equation $x^2=(z+y)(z-y)$ the factors on the right-hand side are coprime and equal to $a^2$ and $b^2$, whence 

(2)\qquad $x=ab,y=(a^2-b^2)/2,z=(a^2+b^2)/2,a>b$, $a,b$ are odd, $(a,b)=1$. 

Since $a$ and $b$ are odd, we can enumerate the triples by successively increasing $a$ and changing $b$ from 1 to $a$-2: $(a,b)$=(3,1);(5,1),5,3);(7,1),7,3),7,5);(9,1),(9,5),(9,7), and so on.

Both versions of the formulas go into each other by replacing $a=u+v,b=u-v$. In a specific task, one or the other option or even both at the same time may be more convenient. For example, it is clear from these formulas that every odd number $x=ab=u^2-v^2$ is the difference of two squares, where $u=(a+b)/2,v=(a-b)/2$. This fact is the basis of the Fermat method of factoring numbers. This also solves the problem of two pairs of squares with the same sum. Indeed, if $x=a_1b_1=a_2b_2$, then $x=u_1^2$-$v_1^2=u_2^2$-$v_2^2$, whence $u_1^2+v_2^2=u_2^2+v_1^2$. 

It follows from formulas (1),(2) that each Pythagorean triple is uniquely determined by factoring $x=ab$ or $y=2uv$.

In addition to these universal formulas, several other formulas are known for generating triples with special properties [4].

\newpage
\vspace*{-1.5cm}\centering\textbf{3. Conjugate Pythagorean triples}

\raggedright Formulas (1),(2) express triples either through divisors of an even number $y$ or through divisors of an odd number $x$. The new formulas use one divisor of an even number and one divisor of an odd number

(3)\quad $x_{1,2}=pq\pm q^2,y_{1,2}=pq\pm p^2/2,z_{1,2}=p^2/2+q^2\pm pq$, $p>q>p/2>0$, $p$ is even, $(p,q)=1$,

and also specify two conjugate triples at once.

\begin{theorem}
For every integer $p,q$, where $p$ is even, $(p,q)=1$, $p>q>p/2>0$, there exist two conjugate Pythagorean triples $(x_1,y_1,z_1)$ and $(x_2,y_2,z_2)$. These triples are expressed by formulas (3), and $(x_1,x_2)=q,(y_1,y_2)=p$.
\end{theorem}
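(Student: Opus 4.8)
The plan is to reduce everything to the classical parametrisation (2), whose sufficiency is already granted in the excerpt, and then to read the two gcd claims off the resulting factorisations. First I would rewrite (3) in factored form, noting $x_{1,2}=q(p\pm q)$, and observe that the combinations $z_{1,2}\pm y_{1,2}$ collapse to perfect squares: a one-line computation gives $z_1-y_1=q^2$, $z_1+y_1=(p+q)^2$, and $z_2+y_2=q^2$, $z_2-y_2=(p-q)^2$. Hence $z_{1,2}^2-y_{1,2}^2=(z_{1,2}-y_{1,2})(z_{1,2}+y_{1,2})=x_{1,2}^2$ immediately, so both triples are Pythagorean and no brute-force expansion of the squares is needed.

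Next I would match each triple to a choice of parameters in (2). Writing $z+y=a^2$, $z-y=b^2$, $x=ab$, the first triple corresponds to $(a,b)=(p+q,\,q)$ and the second to $(a,b)=(q,\,p-q)$. I would then verify that these pairs satisfy the hypotheses of (2): since $p$ is even and $(p,q)=1$, the number $q$ is odd and both $p\pm q$ are odd; coprimality follows from $(p+q,q)=(p,q)=1$ and $(q,p-q)=(q,p)=1$; and the inequalities $p+q>q$ and, crucially, $q>p-q$ (equivalent to the hypothesis $q>p/2$) give $a>b>0$ in each case. Invoking the sufficiency half of (2) then certifies both triples as primitive, which simultaneously settles positivity.

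Finally I would compute the two gcds from the factorisations. For the odd legs, $(x_1,x_2)=\big(q(p+q),\,q(p-q)\big)=q\cdot(p+q,\,p-q)$, and since $p+q$ and $p-q$ are both odd their gcd divides $\gcd(2p,2q)=2$ and is therefore $1$, giving $(x_1,x_2)=q$ unconditionally. For the even legs I would write $y_1=p(q+p/2)$ and $y_2=p(q-p/2)$, so that $(y_1,y_2)=p\cdot(q+p/2,\,q-p/2)$, where the residual gcd divides $\gcd(2q,p)=2$.

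The main obstacle is precisely this last residual factor. Setting $m=p/2$ and using $(q,m)=1$ (any common prime would divide $(p,q)=1$), the quantity $(q+m,\,q-m)$ divides $\gcd(2q,2m)=2$, and its exact value is $1$ or $2$ according as $m$ is even or odd, i.e. according as $p\equiv0$ or $p\equiv2\pmod 4$. Thus the clean identity $(y_1,y_2)=p$ is what I would expect to fall out cleanly when $4\mid p$, while the honest form of the argument has to confront the parity of $p/2$ at this step rather than hide behind the coarse bound $\gcd\mid 2$. I would therefore concentrate most of the care here — pinning down $(q+p/2,\,q-p/2)$ exactly — since it is both the heart of the stated gcd and the point at which the statement is most delicate.
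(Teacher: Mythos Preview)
Your route is sound and genuinely different from the paper's. The paper does not feed $(p,q)$ into formula~(2); it instead starts from an arbitrary primitive triple, uses the identity $(x+y+z)^2=2(z+x)(z+y)$ together with coprimality of $2(z+x)$ and $z+y$ to \emph{define} $p,q$ via $2(z+x)=p^2$, $z+y=q^2$, and then reads off the ``minus'' half of~(3), checking the ``plus'' half directly afterwards. Your forward verification through~(2) is tidier for the existence statement actually being claimed: once you check that $(a,b)=(p+q,q)$ and $(a,b)=(q,p-q)$ are odd, coprime, and satisfy $a>b>0$, primitivity and positivity come for free, whereas the paper argues these separately.

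On the gcd of the even legs you are not missing an idea; you have put your finger on a real defect in the assertion. The paper's proof simply declares that $(p,q)=1$ forces $(q-p/2,\,q+p/2)=1$ and concludes $(y_1,y_2)=p$. Your case analysis is the correct one: with $m=p/2$ and $(q,m)=1$, one has $(q+m,\,q-m)=1$ when $m$ is even and $=2$ when $m$ is odd. A concrete witness is $p=6$, $q=5$, which satisfies all the hypotheses and yields $y_1=48$, $y_2=12$, so $(y_1,y_2)=12=2p\neq p$. Hence $(y_1,y_2)=p$ exactly when $4\mid p$, and $(y_1,y_2)=2p$ when $p\equiv 2\pmod 4$. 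The care you planned to spend at this step is warranted: the paper, not your argument, has the gap here.
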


\begin{proof}
If $x^2+y^2=z^2$, then $(x+y+z)^2=2(z+x)(z+y)$, and vice versa. All prime divisors of $z+y$ are divisors of $x+y+z$, and therefore are also divisors of $x$. Similarly, the divisors of $z+x$ are the divisors of $y$. Since $(x,y)=1$, then $2(z+x)$ and $(z+y)$ are coprime and equal to $p^2$ and $q^2$, where $p$ is even, $(p,q)=1$.  Considering the positivity of all numbers, we have $x+y+z=pq,x=pq-q^2,y=pq-p^2/2$, $z=pq-x-y=p^2/2+q^2-pq$.

Since $(p,q)=1$, the factors on the right-hand sides of $x=q(p-q),y=p(q-p/2)$ are coprime, and also $(x,y)=1$. If $p>q>p/2>0$, then $x>0$ and $y>0$, so $(x,y,z)$ is a primitive Pythagorean triple. In addition, $x_2=pq+q^2,y_2=pq+p^2/2,z_2=p^2/2+q^2+pq$ are also positive, pairwise coprime and satisfy the equation $x^2+y^2=z^2$. Finally, from $(p,q)=1$ it follows that $(p-q,p+q)=1$, and $(q-p/2,q+p/2)=1$, so $(x1,x2)=q,(y1,y2)=p$.
\end{proof}

\begin{theorem}
Every Pythagorean triple $(x,y,z)$ can be expressed in two ways, as
\end{theorem}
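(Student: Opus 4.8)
The plan is to invert formulas (3): given an arbitrary primitive triple $(x,y,z)$ with $x$ odd, $y$ even and $z$ odd, I will exhibit, for each of the two sign choices, the parameters that reproduce it. The engine is a pair of companion factorizations. The first is the identity already exploited in the previous theorem, $(x+y+z)^2=2(z+x)(z+y)$, which produces the representation with the ``$-$'' signs; the second is $(x+y-z)^2=2(z-x)(z-y)$, obtained by the same expansion together with $x^2+y^2=z^2$, and it will produce the representation with the ``$+$'' signs. Both right-hand sides are products of positive integers, since $z>x$ and $z>y$.

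First I would treat the ``$-$'' branch exactly as before. Here $z+x$ is even and $z+y$ is odd, and the two factors are coprime: a common prime would divide $x+y+z$ and hence both $x$ and $y$, contradicting $(x,y)=1$. Thus the coprime factors $2(z+x)$ and $z+y$ of the perfect square $(x+y+z)^2$ are themselves perfect squares, say $2(z+x)=p^2$ with $p$ even and $z+y=q^2$, and $(p,q)=1$. Substituting recovers the ``$-$'' form of (3), and the inequalities come for free: $p>q$ reduces to $z+2x>y$ and $q>p/2$ reduces to $z+2y>x$, both immediate from $z>x$, $z>y$. This is the first of the two ways.

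Next I would repeat the argument verbatim on the companion identity. Now $z-x$ is even and $z-y$ is odd, and they are again coprime, since a common prime would divide $x+y-z$ and therefore both $x=(x+y-z)+(z-y)$ and $y=(x+y-z)+(z-x)$. Hence $2(z-x)=(p')^{2}$ with $p'$ even and $z-y=(q')^{2}$, and substituting yields the ``$+$'' form of (3): the second way. To finish, I would observe that the two representations are genuinely different, as the first is governed by $z+x$ and the second by $z-x$, which differ because $x>0$.

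I expect the expansions and the two coprimality checks to be mechanical (the same trick applied twice). The step to watch is the ordering condition $p>q>p/2$: it is forced automatically on the ``$-$'' branch, as shown above, but on the ``$+$'' branch only positivity and coprimality are guaranteed, so the statement of the second representation must claim no more about the ordering of $p'$ and $q'$ than actually holds. Pinning down exactly which constraints survive on the ``$+$'' side is the one genuinely asymmetric point and the main thing I would be careful about.
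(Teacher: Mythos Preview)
Your proposal is correct and follows exactly the paper's route: the first representation is obtained from $(x+y+z)^2=2(z+x)(z+y)$ as in Theorem~1, and the second from the companion identity $(x+y-z)^2=2(z-x)(z-y)$. Your extra care about coprimality and about the ordering $p>q>p/2$ possibly failing on the ``$+$'' branch is justified and in fact goes beyond what the paper's two-line proof says; indeed the example $(15,8,17)$ gives $p'=2,\ q'=3$, confirming your caution.
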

(4)\quad $x=p_1q_1-q_1^2,y=p_1q_1-p_1^2/2,z=p_1^2/2+q_1^2-p_1q_1$,	 \textit{and}\\
~~\qquad $x=p_2q_2+q_2^2,y=p_2q_2+p_2^2/2,z=p_2^2/2+q_2^2+p_2q_2$.

\begin{proof}
The first method is derived in Theorem 1. The second option is derived in the same way from the equation $(x+y-z)^2=2(z-x)(z-y)$.
\end{proof}

Formulas (3),(4) link Pythagorean triples into chains. If the triple $(x,y,z)$ is expressed in terms of $p_1,q_1$ and $p_2,q_2$, then the parameters $p_1,q_1$ form $(x^-,y^-,z^-)$ and $(x,y,z)$, and the parameters $p_2,q_2$ form $(x,y,z)$ and $(x^+,y^+,z^+)$.

\begin{theorem}
Each Pythagorean triple $(x,y,z)$ has four conjugate triples.
\end{theorem}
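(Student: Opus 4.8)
The plan is to push the device behind Theorems 1 and 2 to its natural limit. That device writes a symmetric combination of $x,y,z$ as twice a product of two coprime factors; starting from $x^2+y^2=z^2$ one has not only the two identities already exploited, $(x+y+z)^2=2(z+x)(z+y)$ and $(x+y-z)^2=2(z-x)(z-y)$, but also the two ``mixed'' companions $(z+x-y)^2=2(z+x)(z-y)$ and $(z-x+y)^2=2(z-x)(z+y)$. These four are exactly the four ways of pairing one of the even factors $z\pm x$ with one of the odd factors $z\pm y$; the two remaining pairings $(z+x)(z-x)=y^2$ and $(z+y)(z-y)=x^2$ merely reproduce the classical formulas (1),(2) and are discarded.

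For each of the four identities I would rerun the divisibility argument of Theorem 1. Every prime dividing the odd factor $z\pm y$ divides the square on the left, hence divides the relevant linear form $x+y+z$, $x+y-z$, $z+x-y$ or $z-x+y$, whence, after adding or subtracting the factor, it divides $x$; symmetrically every prime dividing $z\pm x$ divides $y$. Since $(x,y)=1$, the even factor (carrying the extra $2$) is coprime to the odd one, and by parity the two must equal $p^2$ and $q^2$ with $p$ even, $q$ odd and $(p,q)=1$. Thus each identity yields a parameter pair $(p,q)$, and Theorem 1 turns that pair into a conjugate pair of triples through formula (3), one member of which is $(x,y,z)$ up to the sign of a single coordinate. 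The four identities therefore furnish four conjugate triples, and comparing them with Section 1 identifies them with the three children and the parent of $(x,y,z)$ in the Berggren tree.

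The delicate point is the sign and range bookkeeping in the three non-principal cases. For the mixed identities the triple is recovered from (3) only after flipping the sign of $x$ or of $y$, precisely the flips $(-x,y,z),(x,-y,z)$ that Berggren used to generate children, so I must identify $(x,y,z)$ with these variants before calling the partner a conjugate. Moreover the parameter pairs need not obey the normalising inequality $p>q>p/2>0$ of (3): for the children produced by the mixed identities one finds $q<p/2$ or $p<q$, and then it is the opposite $\pm$ branch of (3) that carries the positive primitive triple, so I have to check each time which branch gives the genuine conjugate. A last wrinkle is the root: for $(3,4,5)$ the identity $(x+y-z)^2=2(z-x)(z-y)$ gives $(p,q)=(2,1)$ and its conjugate collapses to the trivial $(1,0,1)$, the absent parent, so the tally of four is read with this single degeneration permitted.

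Finally, to see that there are no further conjugates I would note that any conjugate of $(x,y,z)$ shares a representation (3) with it, that such a representation forces $2(z\pm x)$ and $z\pm y$ to be perfect squares, and that only the four sign choices catalogued above are available; hence the four triples just constructed are all of them.
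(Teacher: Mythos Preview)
Your argument is correct but follows a different route from the paper's.  You exhaust the four quadratic identities
\[
(x+y+z)^2=2(z+x)(z+y),\quad (x+y-z)^2=2(z-x)(z-y),\quad (z+x-y)^2=2(z+x)(z-y),\quad (z-x+y)^2=2(z-x)(z+y),
\]
and read off a parameter pair $(p,q)$ from each as the square roots of $2(z\pm x)$ and $z\pm y$; the exhaustiveness step then comes from noting that any occurrence of $(\pm x,\pm y,z)$ in formula~(3) forces one of these four sign patterns.  The paper instead pivots on formula~(2): write the odd leg as $x=ab$ with $a>b$ coprime, observe that any representation~(4) gives $x=q_1(p_1-q_1)$ or $x=q_2(p_2+q_2)$ with coprime factors, so $\{q_1,p_1-q_1\}=\{a,b\}$ or $\{q_2,p_2+q_2\}=\{a,b\}$, two orderings in each case, hence exactly four $(p,q)$ pairs and four conjugates.

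Your approach is more symmetric in $x$ and $y$ and makes the connection to Berggren's sign flips $(\pm x,\pm y,z)$ transparent from the start, at the cost of the sign and range bookkeeping you correctly flag as delicate.  The paper's approach is terser: unique factorisation of $x$ delivers existence and the exact count simultaneously, with no separate exhaustiveness paragraph, though it breaks the $x\leftrightarrow y$ symmetry by singling out the odd leg.
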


\begin{proof}
It follows from formulas (2) that each Pythagorean triple is uniquely determined by the decomposition of an odd number $x$ into coprime factors $x=ab$. According to formulas (4), $x=q_1(p_1-q_1)$ or $x=q_2(p_2+q_2)$, and here the factors are also coprime. Therefore, either $q_1=a,b$ and accordingly $p_1$ is found, or $q_2=a,b$ and $p_2$ is found. These options specify the same triple, but $x,y$ may be negative due to the violation of the condition $p>q>p/2>0$.
For each of these options, according to formulas (3), there is a conjugate triple.
\end{proof}

These four triples correspond to the parent and three child triples in the Berggren tree.  

For the triple (5,12,13) $x$=5=1$\cdot$5. Option $q_1=1,p_1-q_1=5$ leads to (5,-12,13) and conjugate (7,24,25). Option $q_1=5,p_1-q_1=1$ to (5,12,13) and (55,48,73). Option $q_2=1,p_2+q_2=5$ to (5,12,13) and (3,-4,5). Option $q_2=5,p_2+q_2=1$ to (5,-12,13) and (-45,-28,53). 

So, the new formulas explain the construction of the Berggren tree. They are also translated into formulas (2) by replacing $a=q,b=p-q$ (or $b=p+q$), but they are significantly different from the first options. Formulas (3) allow us to complete any coprime $p,q$ to elements $x,y$ for which $q|x,p|y$. The general formula for such $x,y$ can be obtained by replacing $p,q$ with $\alpha p,\beta q$, so $a=\beta q,b=\alpha p\pm \beta q$ (provided $a,b$ are odd and coprime). 

New formulas can be useful for some problems. For example, let's prove that the equation $x^4+y^4=z^4$ has no natural roots. Indeed, for such roots, there will be coprime $p,q$ such that $x^2=q(p-q),y^2=p(q-p/2)$. The factors on the right-hand sides of these equations are also coprime, so $q=a^2,p=b^2,p-q=b^2-a^2=c^2$. Hence, $b^2=a^2+c^2$, where the left part is divisible by 4, while the right part is only divisible by 2, which is impossible.~$\Box$

Additional possibilities are provided by the combination of new formulas with formulas (1),(2). Consider the problem of finding Pythagorean pairs such as $x=x_1x_2,y=y_1y_2$, wherein $x_1,y_1$ and $x_2,y_2$ are also Pythagorean pairs [5]. Let $x=q(p-q),y=p(q-p/2)$ according to formulas (3). Obviously, $q,p$ can easily be a Pythagorean pair or even a product of several Pythagorean pairs. So, it is necessary to find $q,p$ such that $p-q$ and $q-p/2$ form a Pythagorean pair. According to formulas (1), this requires $p-q=u^2-v^2,q-p/2=2uv$, from which $q=u^2-v^2+4uv,p=2u^2-2v^2+4uv$. So, if $u>v,2|uv,(u,v)=1$, then $p-q$ and $q-p/2$ form a Pythagorean pair. For $q,p$ to also be a Pythagorean pair, we need $q=a^2-b^2,p=2ab$. Thus, the problem is reduced to a system of equations $u^2-v^2+4uv=a^2-b^2,u^2-v^2+2uv=ab$, from which $2uv=a^2-b^2-ab$. But when $(a,b)=1$, the right-hand side of the last equation is always odd, so such Pythagorean pairs do not exist.~$\Box$

\centering\textbf{4. Modified formulas of triples}

\raggedright If formula (3-) expresses the triple $(x,y,z)$ through the parameters $p,q$, then formula (3+) expresses the child triple through the same $p,q$. But any triple can be expressed in terms of the parameters of any other triple. For example, consider the triples (7=7$\cdot$1,24,25) and (275=25$\cdot$11,252,373). According to formulas (2), for the first triple, 7=$ab$=7$\cdot$1, 24=($7^2$-$1^2$)/2, and for the second 275=$a_1b_1$=25$\cdot$11, 252=($25^2$-$11^2$)/2. One of the options for the transition from $a,b$ to $a_1,b_1$ will be $a_1=4a-3b,b_1=2a-3b$. So, the formulas

(2a)\qquad $x=(4a-3b)(2a-3b),2y=(4a-3b)^2-(2a-3b)^2,2z=(4a-3b)^2+(2a-3b)^2$,\\
~~\qquad~~\qquad $a>b$, $a,b$ are odd, $(a,b)=1$

will generate a Pythagorean triple whenever $a,b$ generate some other triple.

This modification can be done with other formulas as well. The modified variants will be as universal as the original ones if the mapping $(a,b)\to (a_1,b_1)$ is one-to-one and preserves relative primality, for example, $a_1=(a^2+b^2)/2, b_1=(a^2-b^2)/2$. Our mapping is not like that, since (3,1)$\to$(9,3) for it.

We will show how the modification of the formulas leads to the modification of the Berggren tree. For convenience, let's write the formulas of child triples in terms of formulas (2). Let $(x,y,z)$ be the parent triple, and $(x_{1-3},y_{1-3},z_{1-3})$ be the children. Omitting the details, we write that the following formulas hold for $x=ab$ $(a>b)$:

(5)~\quad $x_1=2b^2+ab,y_1=(a^2+3b^2)/2+2ab$;      $x_2=2a^2+ab,y_2=(3a^2+b^2)/2+2ab$;\\
~\qquad~\qquad $x_3=2a^2-ab,y_3=(3a^2+b^2)/2-2ab$.

\begin{figure}[h]
\captionsetup{format=empty}
\includegraphics[height=9.1cm, right]{./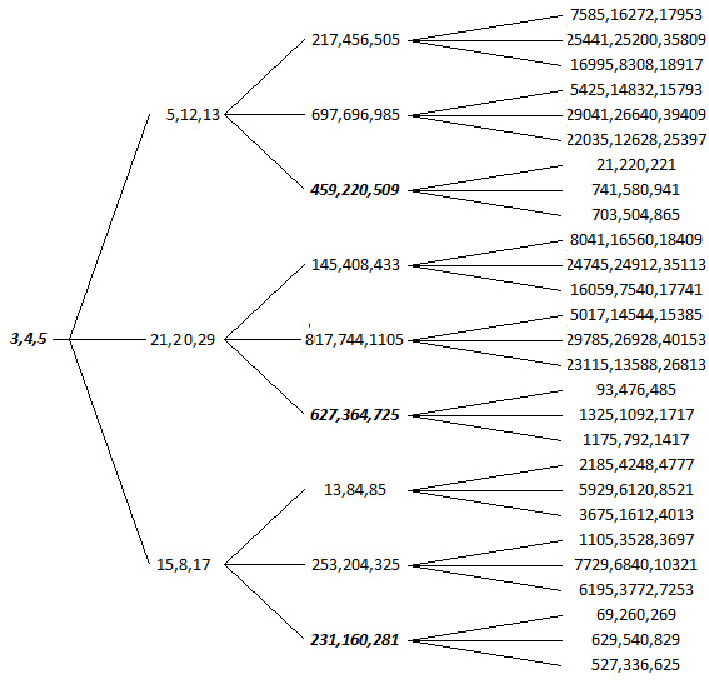}
\end{figure}

\begin{minipage}[c]{7cm}
\vspace*{-12.4cm}
Let's replace $a,b$ in formulas (5) with $(4a-3b),(2a-3b)$ and build a tree from the root (3,4,5). In those places where the values of the formulas are not coprime (highlighted in bold), the common factors were eliminated.\\

Palmer, Ahuja, Tikoo [6] gave formulas for replacing the transformation $(a,b)\to (a_1,b_1)$ with the 3x3 matrix.\\

However, for trees built according to modified formulas, the transition matrices from $(x,y,z)$ to $(x_{1-3},y_{1-3},z_{1-3)}$ are not fixed but different for each parent triple.
\end{minipage}

\vspace*{-1cm}
\centering\textbf{5. A new way of building trees}

\raggedright Slightly modifying Berggren's method, we note that the triple $(x+ad,y+bd,z+cd)$ will be Pythagorean if $a,b,c$ are arbitrary integers and $d=2(cz-ax-by)/(a^2+b^2-c^2)$ is integer.

To go from the parent triple to the children, we apply this transformation to the triples $(-x,y,z),(-x,-y,z),(x,-y,z)$, each time getting a new value of $d$. It is clear that when $a=b=c=1$ we will get the Berggren tree. The values $ad,bd,cd$ specify the shift from auxiliary triples to children. Applying this shift to the triple $(x,y,z)$, we get its parent triple $(-x_0,y_0,z_0),(-x_0,-y_0,z_0)$ or $(x_0,-y_0,z_0)$, depending on the branch of $(x,y,z)$.

\begin{figure}[t]
\captionsetup{format=empty}
\includegraphics[height=3.6cm, right]{./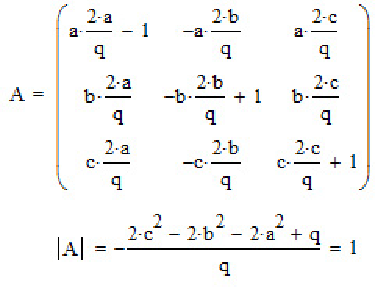}
\end{figure}

\begin{minipage}[t]{11.5cm}
\vspace*{-4.5cm}
For any $a,b,c$, it is possible to construct transformation matrices that will be fixed for all $x,y,z$.\\

Consider the transition $(-x,y,z)\to (x_1=-x+ad,y_1=y+bd,z_1=z+cd)$. For it, $d=(2ax-2by+2cz)/q$, where $q=a^2+b^2-c^2$. This transition can be performed by multiplying the matrix $A$ by the vector $(x,y,z)$. The determinant of this matrix is equal to 1.
\end{minipage}

\vspace*{-0.5cm}
The determinants of other matrices are also equal to $\pm 1$. The transition $(x,y,z)\to (x_0=x+ad,y_0=y+bd,z_0=z+cd)$ from any child triple to the parent triple is specified by matrix $D$.

When $q=\pm 1$ or $q=\pm 2$, then the matrices will be integer, and therefore unimodular. Triples with $a^2+b^2-c^2=1$ (for example, 4,7,8 or 41,137,143) were considered by Antalan and Tomenes [7]. Parameters $a=4,b=7,c=8$ correspond to matrices\\
$A=\begin{bmatrix}31&$-$56&64\\56&$-$97&112\\64&$-$112&129\end{bmatrix},B=\begin{bmatrix}31&56&64\\56&97&112\\64&112&129\end{bmatrix},C=\begin{bmatrix}$-$31&56&64\\$-$56&97&112\\$-$64&112&129\end{bmatrix},D=\begin{bmatrix}$-$31&$-$56&64\\$-$56&$-$97&112\\$-$64&$-$112&129\end{bmatrix}.$

\begin{figure}[h]
\captionsetup{format=empty}
\includegraphics[height=9.1cm, right]{./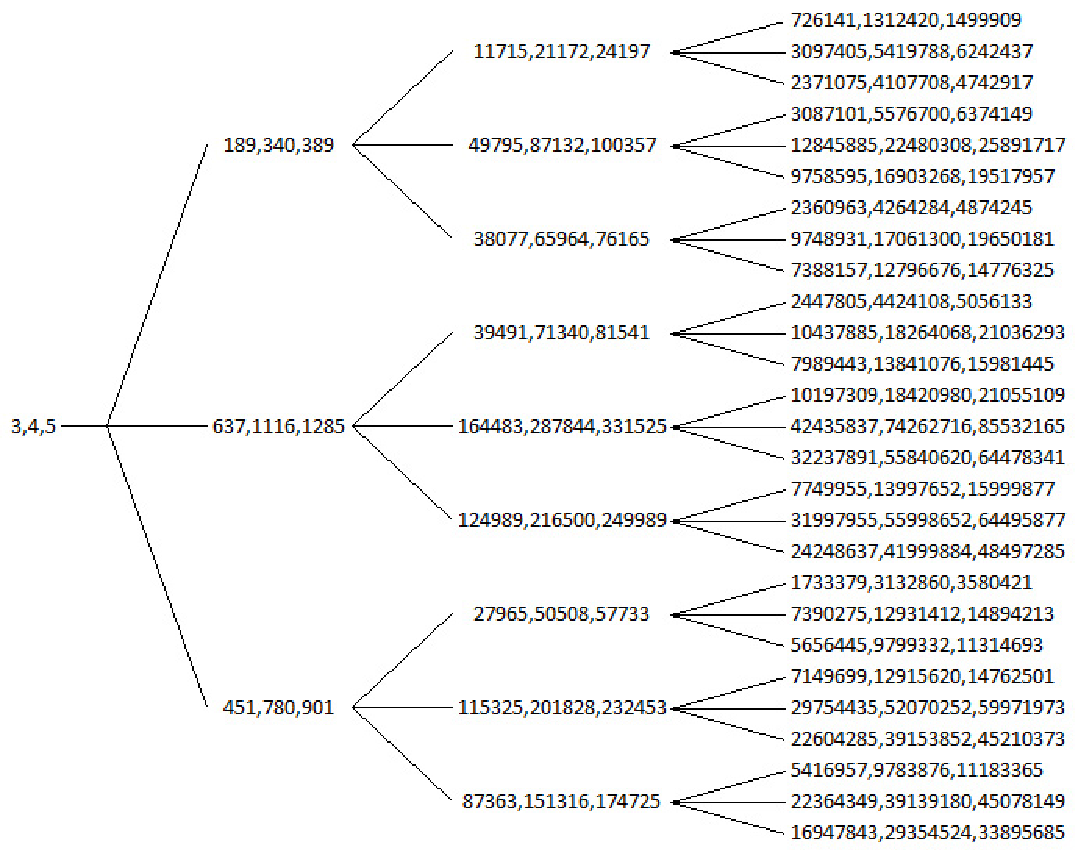}
\end{figure}

\begin{minipage}[t]{5.5cm}
\vspace*{-9cm}
The tree for $a=4,b=7,c=8$ with the root (3,4,5) is unambiguous since from each triple there is a unique backward path to the root.\\

~\\~\\~\\~\\But, obviously, this tree is incomplete.
\end{minipage}

The trees found by Price and Firstov cannot be constructed in this way, since they use different shifts for different triples.

\newpage
\centering\textbf{6. Exotic Pythagorean trees}

\raggedright The procedure described in the previous section can be slightly relaxed. First, if $d=p/q$ is not an integer, then the new triple must be multiplied by $q$. Second, common factors should be eliminated if they appear.

For each pair $(x,y,z)$ and $(x_1,y_1,z_1)$ from the equations $x_1$=$|x$+$ad|,y_1$=$|y$+$bd|,z_1$=$|z$+$cd|$ you can find the required $a,b,c$. But in the vast majority of cases, the resulting tree is incomplete or contains loops or "withered" branches. For example, (3,4,5)$\to$(57,176,185) when $a=6,b=18,c=19$, and at the same time (57,176,185)$\to$(3,4,5).

With $a=b=1,c=2$ a slightly modified Berggren tree is obtained. The chains of conjugate triples are the same here, but at each step, $x$ and $y$ are swapped.

\begin{figure}[h]
\captionsetup{format=empty}
\includegraphics[height=5.5cm, right]{./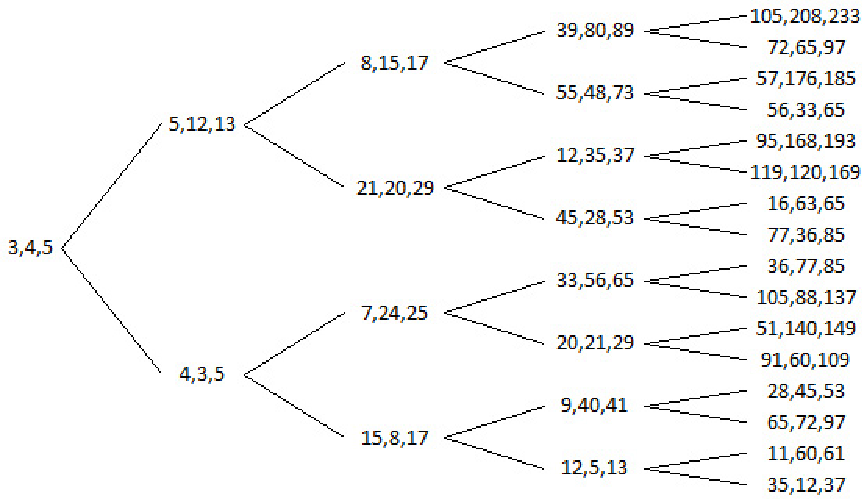}
\end{figure}

\begin{minipage}[t]{6.5cm}
\vspace*{-5cm}
There are more strange options. If, for $a=1,b=2,c=1$, the procedure is applied only to $(-x,-y,z)$ and $(x,-y,z)$, we get a complete binary tree where each triple occurs twice, as $(x,y,z)$ and $(y,x,z)$.
\end{minipage}

\vspace*{-0.8cm}
In this example, applying the procedure to $(-x,y,z)$ leads to degenerate branches with negative triples. When $a=3,b=4,c=3$, branches with negative triples also appear. After their exclusion, a complete tree remains, in which the nodes have both double and triple branching.

\centering\textbf{7. Formulas for equations of higher powers}

\raggedright The formulas (3) are derived from the equation $(x+y+z)^2=2(x+z)(y+z)$. For $n=3$, the formulas $x=pqr-p^3/3,y=pqr-q^3,z=pqr-r^3,p^3/3+q^3+r^3=2pqr$ are derived from the similar equation $(x+y+z)^3=3(y+z)(z+x)(x+y)$.

These formulas can be generalized to odd $n>2$. If in the equation $x^n+y^n+z^n=0$ you make a substitution $a=x+y+z,b=y+z,c=z$, from where $x=a-b,y=b-c,z=c$, then the equation will turn into $a^n-bM_n=0$. A polynomial $M_n$ is unique for every $n$. This equation shows that $a^n$ is divisible by $b=y+z$. Similarly, it is divisible by $z+x,x+y$, and since these binomials are pairwise coprime, then $(x+y+z)^n=S(y+z)(z+x)(x+y)$. If we transfer the common divisors from $S$ to the binomials, we get $(x+y+z)^n=S_1(uy+uz)(vz+vx)(wx+wy)$ with pairwise coprime factors on the right side. This is similar to the equation from which we derived formulas (3), so for some pairwise coprime $p,q,r,s$ $S_1=s^n,y+z=p^n/u,z+x=q^n/v,x+y=r^n/w,x+y+z=pqrs$. Hence $x=pqrs-p^n/u,y=pqrs-q^n/v,z=pqrs-r^n/w,p^n/u+q^n/v+r^n/w=2pqrs$. 

It is not difficult to prove that $u,v,w$ are divisors of $n$, so when $n$ is prime, the formulas are simplified. We emphasize that these formulas describe only the candidates for the roots of the equation. For real roots, the parameter $s$ is required to be consistent with $M_n$.

\vspace*{1cm}
\centering\textbf{8. Sockets}

\raggedright The equations from which the previous formulas were derived can be generalized as follows.

If every prime divisor of $a$ is a divisor of $b$, we will say that $a$ is \textit{included} in $b$ and write $a\to b$.  Let us denote by $\lambda$ the unordered set of integers $\{x_1,\ldots,x_m\}$, and by $\lambda_k$ this set without $x_k$.

\textsc{Definition}. Let $f(\alpha_1,\ldots,\alpha_{m-1})$ be a symmetric integer-valued function over the ring $Z$.  Then the set $\lambda$ is called ‘\textit{an $m$-order socket respectively to $f$}‘ if for any $i,j$ $(x_i,x_j)=1$, and $f(\lambda_i)\to x_i$. We also say that \textit{'$f$ generates $\lambda$'}.

A simple example of a socket is the set $\{3,5,22\}$ with the function $f=\alpha+\beta$. Indeed, $f(3,5)\to 22,f(3,22)\to 5,f(5,22)\to 3$.

Let's derive the relations for the socket elements.

We express $f$ in terms of elementary symmetric polynomials and construct a function $F$ with the same polynomials, but of the $m$th order. For example, if $f(\alpha,\beta)=(\alpha+\beta)^2-3\alpha\beta$, then $F(\alpha,\beta,\gamma)=(\alpha+\beta+\gamma)^2-3(\alpha\beta+\beta\gamma+\gamma\alpha)$, i.e., both functions are equal to $e_1^2-3e_2$.
 
Grouping in $F$ all the terms containing $x_k$, we get $F(\lambda)=x_kb_k+f(\lambda_k)$, where $b_k$ is the polynomial of $\lambda$. This shows that $f(\lambda_k)\to F(\lambda)$ and $F^n(\lambda)=Sf(\lambda_1)f(\lambda_2)\cdots f(\lambda_m)$ for some $S,n$. Let's transfer common divisors from $S$ to each $f(\lambda_i)$, then $F^n(\lambda)=S'u_1f(\lambda_1)u_2f(\lambda_2)\cdots u_mf(\lambda_m)$. On the right-hand side of this equation, all factors $u_if(\lambda_i)$ and $S'$ are pairwise coprime, so for some pairwise coprime $p_1,p_2,\ldots,p_m,s$, and associated $u_1,u_2,\ldots,u_m$ 

(6)\qquad $f(\lambda_k)=p_k^n/u_k,S=s^n\prod u_i,F(\lambda)=s\prod p_i,b_kx_k=s\prod p_i-p_k^n/u_k$.

Summing $F(\lambda)=x_kb_k+f(\lambda_k)$ over all $k$, we get $\sum p_i^n/u_i=c+(m-1)s\prod p_i$, where $c=F(\lambda)-\sum b_ix_i$.

Sockets have interesting properties that are beyond the scope of this article. They may be useful for some Diophantine problems. A significant limitation is the requirement for elements to be pairwise coprime. For the equation $x^2+y^2=c^2$ $\{x,y\}$ is a socket with $f=c-\alpha$, and for the equation $x^3+y^3+z^3=0$ $\{x,y,z\}$ is a socket with both $f=\alpha+\beta$ and $f=\alpha^2-\alpha\beta+\beta^2$, since $-x^3=(y+z)(y^2-yz+z^2)$. We emphasize once again that the formulas do not give the roots of the equations, but only the candidates for the roots.

\newpage
\centering\textbf{9. Theorem of Pythagoras}

\raggedright The sum of the squares of the legs of a right-angled triangle is equal to the square of the hypotenuse. There are many ways to prove this.

\begin{figure}[h]
\captionsetup{format=empty}
\includegraphics[height=4.5cm, right]{./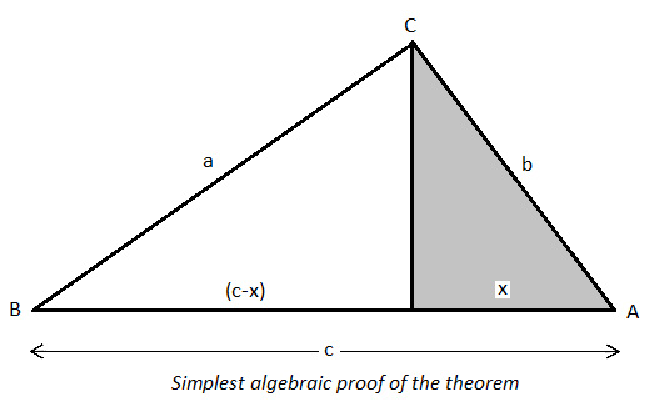}
\end{figure}

\begin{minipage}[t]{10cm}
\vspace*{-5cm}
"The simplest algebraic proof" [8] is based on this figure. The shaded triangle is similar to triangle $ABC$, so \mbox{$b:x=c:b$} and $b^2=cx$. Likewise, the unshaded triangle is similar to $ABC$, whence $a:(c-x)=c:a$, or $a^2=c^2-cx$. After summing, we have $a^2+b^2=c^2$.
\end{minipage}

\vspace*{-0.5cm}
An even simpler proof [9] also relies on these similar triangles. The areas of similar figures are related to each other, like squares of the corresponding linear dimensions (since the area is proportional to the product of two linear dimensions). Thus, the areas of our triangles are proportional to the squares of their hypotenuses $a,b,c$. That is, $s_a+s_b=s_c$, $s_a/s_c+s_b/s_c=a^2/c^2+b^2/c^2=1$, whence $a^2+b^2=c^2$.

\textsc{Corollary}. For two similar figures or surfaces, you can construct the same third one, whose area will be equal to the sum (difference) of the areas of the first two. The corresponding segments of these figures form a right-angled triangle.

In particular, a jacket with a sleeve length of 65 cm will take the same amount of fabric as two jackets with sleeves of 56 and 33 cm.


\begin{thebibliography}{10}
\bibitem{First} Berggren, B. "Pytagoreiska trianglar". Elementa: Tidskrift för elementär matematik, fysik och kemi, 17:129-139, 1934
\bibitem{Second} Price, H. Lee "The Pythagorean Tree: A New Species" \href{https://arxiv.org/abs/0809.4324}{arXiv:0809.4324}, 2008
\bibitem{Third} Firstov, V.E. "A Special Matrix Transformation Semigroup of Primitive Pairs and the Genealogy of Pythagorean Triples", Mathematical Notes, volume 84, number 2:263-279, 2008
\bibitem{Fourth} Weisstein, Eric W. \href{https://mathworld.wolfram.com/PythagoreanTriple.html}{“Pythagorean Triple”} From MathWorld—A Wolfram Web Resource
\bibitem{Fifth} Halbeisen, L., Hungerbühler, N. "Pairing Pythagorean Pairs" \href{https://arxiv.org/abs/2101.08163}{arXiv:2101.08163}, 2021
\bibitem{Sixth} Palmer, L., Ahuja, M., Tikoo, M. “Finding Pythagorean Triple Preserving Matrices”, Missouri Journal of Mathematical Sciences, volume 10, issue 2, Spring 1998, pages 99-105.
\bibitem{Seventh} Antalan, J.R.M. and Tomenes, M.D. A Note on Generating Almost Pythagorean Triples, International Journal Of Mathematics And Scientific Computing, Vol. 5, No. 2, 2015, pp. 100-102
\bibitem{Eighth} Gardner, M. "The Sixth Book of Mathematical Games from Scientific American", Chicago, IL: University of Chicago Press, pp. 155-157, 1984
\bibitem{Ninth} Penrose, R. "The Road to Reality", Jonathan Cape, London, pp. 31-32, 2004
\end{thebibliography}
\end{document}